\documentclass[11pt]{amsart}
\usepackage{amsmath,amssymb,amsthm,mathtools}
\usepackage[margin=1.05in]{geometry}
\usepackage{booktabs}
\usepackage{xcolor}
\usepackage[pdftitle={Characters of F4(C) at involutions},pdfauthor={Chayan Karmakar},colorlinks=true,linkcolor=blue!50!black,citecolor=blue!50!black]{hyperref}

\newcommand{\C}{\mathbb{C}}
\newcommand{\Z}{\mathbb{Z}}
\newcommand{\R}{\mathbb{R}}
\newcommand{\Spin}{\mathrm{Spin}}
\newcommand{\Sp}{\mathrm{Sp}}
\newcommand{\SO}{\mathrm{SO}}
\newcommand{\SL}{\mathrm{SL}}
\newcommand{\GL}{\mathrm{GL}}
\newcommand{\D}{\mathcal{D}}

\theoremstyle{plain}
\newtheorem{theorem}{Theorem}[section]
\newtheorem{proposition}[theorem]{Proposition}
\newtheorem{lemma}[theorem]{Lemma}
\newtheorem{corollary}[theorem]{Corollary}
\theoremstyle{definition}

\theoremstyle{remark}
\newtheorem{remark}[theorem]{Remark}

\title[Characters of $F_4(\C)$ at involutions]{Characters of $F_4(\C)$ at involutions}

\begin{document}
	
	\begin{abstract}
		The exceptional group $F_4(\C)$ has two conjugacy classes of involutions: an involution
		$t$ with centralizer $\Spin(9,\C)$, and the principal involution, with centralizer
		$\bigl(\SL(2,\C)\times\Sp(6,\C)\bigr)/\{\pm1\}$. We determine the values of all
		irreducible characters of $F_4(\C)$ at $t$. For the irreducible representation with
		highest weight $a\omega_1+b\omega_2+c\omega_3+d\omega_4$ (in Bourbaki's labelling), we
		show that the character vanishes at $t$ if and only if $c$ and $d$ are both odd. In all
		other cases its value is $\pm2^{-7}$ times the dimension of an explicit irreducible
		representation of $\Spin(9,\C)$. The proof is a limiting argument in the Weyl character
		formula, combined with a decomposition over the three cosets of $W(B_4)$ in $W(F_4)$ and
		the triality action of $W(F_4)=W(D_4)\rtimes S_3$ on the weight lattice modulo the
		$D_4$-lattice. At the principal involution, Nadimpalli, Pattanayak and Prasad showed that
		the character value is $\pm\tfrac12$ times the dimension of an irreducible representation
		of $\SL(2,\C)\times\Spin(7,\C)$. Comparing the two results, we find that the characters of
		$F_4(\C)$ vanish at the two involutions for exactly the same highest weights. Together
		these results determine all irreducible characters of $F_4(\C)$ at all involutions. We
		also explain how our formula at $t$ relates to the generalized Weyl character formula of
		Gross, Kostant, Ramond and Sternberg.
	\end{abstract}
	
	\subjclass[2020]{Primary 22E46, 20G05; Secondary 17B10, 17B22, 20G41}
	\keywords{Character values, involutions, exceptional groups, $F_4$, Weyl character
		formula, triality}
	\author{Chayan Karmakar}
	\email{karmakar.baban28@gmail.com}
	\maketitle
	\tableofcontents

	\section{Introduction}
	
	Let $G$ be a connected reductive group over $\C$ and $\Theta_\lambda$ the character of
	the irreducible representation of $G$ with highest weight $\lambda$. The values of
	$\Theta_\lambda$ at elements of finite order have a long history. A classical theorem of
	Kostant~\cite{Kos} states that at the principal element of order equal to the Coxeter
	number, every irreducible character takes one of the values $0$ and $\pm1$. For the
	general linear group, Littlewood and Richardson~\cite{LR} and, independently,
	Prasad~\cite{Pr} showed that the character of $\GL_{mn}(\C)$ evaluated at elements whose
	eigenvalues are twisted by a primitive $n$-th root of unity either vanishes or
	factorizes into a product of characters of $\GL_m(\C)$. In~\cite{Kar2}, the author gave a
	different proof of Prasad's factorization theorem, based on a decomposition of the Weyl
	group into left cosets of the subgroup that preserves the congruence classes of the
	entries of $\lambda+\rho$.
	
	Ayyer and Kumari~\cite{AK} extended these results to the other classical groups
	$\SO_{2tn+1}$, $\Sp_{2tn}$ and $\mathrm{O}_{2tn}$. They characterized the highest weights
	for which the specialized character is nonzero, and showed that the nonzero characters
	factorize into characters of smaller classical groups; see also~\cite{AK2}. In~\cite{Kar},
	the author studied the characters of the classical groups at elements of order $2$,
	generalizing a special case of these factorization results, and also treated the
	exceptional group $G_2(\C)$. The method there is a deformation of the Weyl character
	formula: one evaluates the character at a nearby regular element and passes to the limit.
	
	Recently, Nadimpalli, Pattanayak and Prasad~\cite{NPP} studied characters of an arbitrary
	connected reductive group at the torsion elements $C_m=\rho^\vee(e^{2\pi i/m})$. They
	proved that $\Theta_\lambda(C_m)\neq0$ if and only if a certain conjugacy condition holds
	in the dual group, and that in this case $\Theta_\lambda(C_m)$ equals, up to sign, the
	ratio of the dimension of an irreducible representation of a subgroup of the dual group to
	an explicit constant $d_m$, whose complete description is due to Polo~\cite{Polo}. For
	$m=2$ this determines the characters at the principal involution $\rho^\vee(-1)$. A
	related nonvanishing criterion appears in the work of Reeder~\cite{Re2}.
	
	An involution of a semisimple group need not be conjugate to the principal one, and the
	results of~\cite{NPP} do not apply to the other classes. The conjugacy classes of
	involutions are classified by Borel and de Siebenthal~\cite{BdS}; see
	also~\cite{Ad,Re1}. The group $G_2(\C)$ has a single class of involutions, which is
	principal, so $F_4(\C)$ is the smallest exceptional group with a non-principal
	involution. It has exactly two conjugacy classes of involutions: the principal involution
	$t_0$, with centralizer $\bigl(\SL(2,\C)\times\Sp(6,\C)\bigr)/\{\pm1\}$, and an involution
	$t$ with centralizer $\Spin(9,\C)$. Both centralizers are connected, since $F_4(\C)$ is
	simply connected~\cite{St}. The purpose of this paper is to compute all irreducible
	characters of $F_4(\C)$ at $t$.
	
	Our main result, Theorem~\ref{thm:main}, states the following. Let
	$\lambda=a\omega_1+b\omega_2+c\omega_3+d\omega_4$ be a dominant weight, in the labelling
	of~\cite{Bou1}. Then $\Theta_\lambda(t)=0$ if and only if $c$ and $d$ are both odd. In
	all other cases,
	\[
	\Theta_\lambda(t)=\pm2^{-7}\dim V^{\Spin(9,\C)}_{\nu}
	\]
	for an explicit dominant weight $\nu$ of $\Spin(9,\C)$, which depends on the parities of
	$c$ and $d$.
	
	The proof follows the deformation method of~\cite{Kar}. Writing the limit of the Weyl
	character formula as a sum over the three cosets of $W(B_4)$ in $W(F_4)$ reduces the
	computation to the sign $\sigma(\nu)=e^{\nu}(t)$ of three weights. The key structural
	input is the triality action of $W(F_4)=W(D_4)\rtimes S_3$ on the quotient $P/D_4$ of the
	weight lattice by the $D_4$-lattice (Lemma~\ref{lem:triality}). This action determines
	the signs and leads to the four cases of Theorem~\ref{thm:main}. We also explain how the
	formula can be deduced from the generalized Weyl character formula of Gross, Kostant,
	Ramond and Sternberg~\cite{GKRS,Ko} for the equal-rank subgroup
	$\Spin(9,\C)\subset F_4(\C)$ (Remark~\ref{rem:GKRS}).
	
	The triplets of $\Spin(9,\C)$-representations attached by~\cite{GKRS} to the irreducible
	representations of $F_4(\C)$ were written down explicitly by Pengpan and
	Ramond~\cite{PR}, who observed that, according to the parities of the Dynkin labels,
	each triplet consists either of one spinorial and two tensorial representations or of
	three spinorial ones. Since $t$ acts by $+1$ on tensorial and by $-1$ on spinorial
	representations of $\Spin(9,\C)$, the values $\Theta_\lambda(t)$ can be read off from
	these results. Our contribution is to state and prove this directly as a result on
	character values, by an argument that uses only the Weyl character formula for
	$F_4(\C)$, and to compare the result with the principal involution.
	
	Comparing with the case $m=2$ of~\cite{NPP} (Theorem~\ref{thm:principal}), we find that
	the characters of $F_4(\C)$ vanish at the two involutions for exactly the same highest
	weights, namely those with $c$ and $d$ both odd; equivalently, by
	Lemma~\ref{lem:classmu}, those with $\lambda+\rho$ in the $D_4$-lattice
	(Corollary~\ref{cor:all-involutions}).
	
	The paper is organized as follows. Section~\ref{sec:main} states the main result. In
	Section~\ref{sec:prelim} we fix notation for the root datum of $F_4(\C)$, the subsystem
	of type $B_4$ and the involution $t$. Section~\ref{sec:triality} studies the action of
	$W(F_4)$ on $P/D_4$, and Section~\ref{sec:anti} collects two lemmas on anti-invariant
	polynomials. Section~\ref{sec:coset} proves the coset formula for $\Theta_\lambda(t)$
	(Proposition~\ref{prop:coset}), and Section~\ref{sec:identity} the vanishing identity
	(Proposition~\ref{prop:identity}). Theorem~\ref{thm:main} is proved in
	Section~\ref{sec:proof}. In Section~\ref{sec:principal} we recall the result
	of~\cite{NPP} at the principal involution and compare the two, and
	Section~\ref{sec:remarks} contains concluding remarks.
	
	\section{Main result}\label{sec:main}
	
	Throughout, $G=F_4(\C)$ and $\lambda=a\omega_1+b\omega_2+c\omega_3+d\omega_4$ is a
	dominant integral weight, with $\Theta_\lambda$ the character of the irreducible
	representation $V_\lambda$ of $G$. Let $t\in G$ be an involution with centralizer $H=C_G(t)\cong\Spin(9,\C)$ (see
	Section~\ref{sec:invo}), and put $\mu=\lambda+\rho$. Throughout, the subscript $H$ refers
	to objects attached to $H$: its root system $\Phi_H$, Weyl group $W_H$, half-sum of
	positive roots $\rho_H=\tfrac12(7,5,3,1)$ (Section~\ref{sec:B4}), and the polynomial
	$\pi_H$ (Section~\ref{sec:anti}). For $y\in\R^4$ define
	\begin{equation}\label{eq:D-def}
		\D(y)=\prod_{i=1}^4\frac{y_i}{(\rho_H)_i}\;\prod_{1\le i<j\le 4}
		\frac{y_i^2-y_j^2}{(\rho_H)_i^2-(\rho_H)_j^2}
		=\frac{1}{226800}\prod_{i=1}^4 y_i\prod_{1\le i<j\le4}\bigl(y_i^2-y_j^2\bigr).
	\end{equation}
	By the Weyl dimension formula, $\D(\nu)=\dim V^{\Spin(9,\C)}_{\nu-\rho_H}$ whenever
	$\nu-\rho_H$ is a dominant weight of $\Spin(9,\C)$. Moreover, $\D$ is anti-invariant
	under $W(B_4)$, that is, $\D(wy)=\varepsilon(w)\,\D(y)$ for all $w\in W(B_4)$ and
	$y\in\R^4$, where $\varepsilon$ denotes the sign character of $W(B_4)$. For example,
	$\D(\lambda+\rho)$ is the dimension of the irreducible representation of $\Spin(9,\C)$
	with highest weight $\lambda+\rho-\rho_H$.
	
	We use Bourbaki's coordinates for $F_4$ (Section~\ref{sec:prelim}), in which
	$\omega_1=(1,1,0,0)$, $\omega_2=(2,1,1,0)$, $\omega_3=\tfrac12(3,1,1,1)$,
	$\omega_4=(1,0,0,0)$ and $\rho=\bigl(\tfrac{11}{2},\tfrac52,\tfrac32,\tfrac12\bigr)$.
	For $\lambda=a\omega_1+b\omega_2+c\omega_3+d\omega_4$ we have
	\[
	\mu=\lambda+\rho=\Bigl(a+2b+\tfrac{3c}{2}+d+\tfrac{11}{2},\ a+b+\tfrac{c}{2}+\tfrac52,\
	b+\tfrac{c}{2}+\tfrac32,\ \tfrac{c}{2}+\tfrac12\Bigr).
	\]
	Let $\beta_1=\tfrac12(1,1,1,1)$ and $\beta_2=\tfrac12(1,1,1,-1)$; these are short
	roots of $F_4$ not lying in the subsystem $\Phi_H$ of type $B_4$
	(Section~\ref{sec:B4}). Since $\langle\beta_i,\beta_i\rangle=1$, the corresponding
	reflections are $s_{\beta_i}(x)=x-2\langle x,\beta_i\rangle\,\beta_i$; explicitly,
	\[
	s_{\beta_1}(x)=x-\tfrac{x_1+x_2+x_3+x_4}{2}\,(1,1,1,1),\qquad
	s_{\beta_2}(x)=x-\tfrac{x_1+x_2+x_3-x_4}{2}\,(1,1,1,-1).
	\]
	
	\begin{theorem}\label{thm:main}
		Let $t\in F_4(\C)$ be an involution with centralizer $\Spin(9,\C)$, let
		$\lambda=a\omega_1+b\omega_2+c\omega_3+d\omega_4$ be a dominant weight, and let
		$\mu=\lambda+\rho$. With $\beta_1,\beta_2$ as above and $\D$ as in \eqref{eq:D-def},
		\[
		\Theta_\lambda(t)=
		\begin{cases}
			0, & c \text{ and } d \text{ both odd},\\[4pt]
			-2^{-7}\,\D(\mu), & c \text{ odd},\ d \text{ even},\\[4pt]
			\phantom{-}2^{-7}\,\D\bigl(s_{\beta_1}\mu\bigr), & c \text{ even},\ d \text{ odd},\\[4pt]
			\phantom{-}2^{-7}\,\D\bigl(s_{\beta_2}\mu\bigr), & c \text{ and } d \text{ both even}.
		\end{cases}
		\]
		In particular, $\Theta_\lambda(t)=0$ if and only if $c$ and $d$ are both odd. Otherwise
		$\Theta_\lambda(t)=\pm2^{-7}\dim V^{\Spin(9,\C)}_{\nu}$, where $\nu=(w\mu)^{+}-\rho_H$,
		with $w=1$, $s_{\beta_1}$ or $s_{\beta_2}$ in the three cases respectively, and
		$(w\mu)^{+}$ denotes the unique $B_4$-dominant element of the $W(B_4)$-orbit of $w\mu$.
	\end{theorem}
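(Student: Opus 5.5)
We follow the deformation method of~\cite{Kar}. Take $t=\exp(2\pi i x_0)$ with $x_0$ in the coweight space chosen so that $e^{\alpha}(t)=1$ exactly for $\alpha\in\Phi_H$; for instance $x_0=\tfrac12\epsilon_1^*$ gives $e^{\alpha}(t)=(-1)^{\langle\alpha,\epsilon_1\rangle\cdot}$ on integral roots and $-1$ on the short roots $\tfrac12(\pm1,\pm1,\pm1,\pm1)$. This choice makes the long roots and $\pm\epsilon_i$ trivial, so the centralizer has root system $B_4$. Define $\sigma(\nu)=e^{\nu}(t)$ for $\nu\in P$.

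We evaluate the Weyl character formula at $t\exp(sy)$ with $y$ regular and let $s\to0$. The numerator is $\sum_{w\in W}\varepsilon(w)\sigma(w\mu)e^{s\langle w\mu,y\rangle}$. The Weyl denominator factors as a product over $\Phi^+\setminus\Phi_H^+$ (the eight short roots $\tfrac12(1,\pm1,\pm1,\pm1)$) and a product over $\Phi_H^+$. The first product tends to the nonzero constant $\prod(\sigma(\alpha)^{1/2}-\sigma(\alpha)^{-1/2})=\prod(\pm 2i)$, whose modulus is $2^8$. The second vanishes to order $|\Phi_H^+|=16$ like $s^{16}\pi_H(y)$.

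Next write $W=\bigsqcup_{j=0}^{2}w_jW_H$, using $[W(F_4):W(B_4)]=3$ with representatives $1,s_{\beta_1},s_{\beta_2}$. Since $\sigma$ is $W_H$-invariant (the group $W_H$ fixes $t$), each coset sum is $\varepsilon(w_j)\sigma(w_j\mu)\sum_{u\in W_H}\varepsilon(u)e^{s\langle uw_j\mu,y\rangle}$. By the lemmas on anti-invariant polynomials (Section~\ref{sec:anti}), each such sum equals $s^{16}\pi_H(w_j\mu)\pi_H(y)/|{\rm const}|+O(s^{17})$, since lower-order terms vanish as $W_H$-anti-invariant polynomials of degree $<16$. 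This gives the coset formula (Proposition~\ref{prop:coset}):
\[
\Theta_\lambda(t)=\kappa\bigl(\D(\mu)-\sigma(s_{\beta_1}\mu)\sigma(\mu)^{-1}\ldots\bigr),
\]
more precisely $\Theta_\lambda(t)=\kappa\sum_{j}\varepsilon(w_j)\sigma(w_j\mu)\D(w_j\mu)$ with $\kappa$ a universal constant. We determine $\kappa$ by $\lambda=0$, where the value is $1$, or by tracking the factor $2^{-8}$ together with the normalisation of $\D$.

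The remaining task is the sign bookkeeping. The value $\sigma(\nu)$ depends only on the class of $\nu$ in $P/D_4$ (for $\sigma$ restricted to $P$, up to the $\rho$-shift, which is handled by working with $\mu$ and $\sigma(\rho)$-type normalisation). The class of $\mu$ is read off from the parities of $c,d$ (Lemma~\ref{lem:classmu}): if $c,d$ are both odd then $\mu\in D_4$. Using Lemma~\ref{lem:triality}, the cosets $w_jW_H$ permute the three nonzero classes of $P/D_4\cong(\Z/2)^2$ via $S_3$, so exactly one $w_j$ sends $\mu$ into the class on which $\sigma$ takes one value and the other two into the other. Hence in the three cases with $\mu\notin D_4$, the sum collapses to a single surviving $\D(w_j\mu)$ with sign $\mp$. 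The $2^{-7}$ arises as $2\cdot 2^{-8}$, from two coset terms combining or from one term doubled via the identity below.

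The main obstacle is the case $\mu\in D_4$, together with showing that in the other cases the two "other" terms combine correctly. For this we need the vanishing identity (Proposition~\ref{prop:identity}): $\D(\mu)-\D(s_{\beta_1}\mu)-\D(s_{\beta_2}\mu)$, suitably signed, equals $0$ identically. This is an identity of polynomials on $\R^4$, and we would prove it by noting that the signed sum is $W(F_4)$-anti-invariant of degree $16<24=|\Phi^+|$, so it is divisible by $\pi_{F_4}$ and hence zero. With it, the case $c,d$ odd gives $0$ because all $\sigma(w_j\mu)$ are equal. In the other cases, eliminating one term via the identity yields $2\cdot2^{-8}\D(w\mu)$ with the stated $w$. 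The final claim follows because $\D((w\mu)^+)=\pm\D(w\mu)$ by $W(B_4)$-anti-invariance, and $(w\mu)^+$ is strictly $B_4$-dominant integral-or-half-integral, so it equals $\nu+\rho_H$ with $\nu$ dominant.
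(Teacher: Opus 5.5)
Your plan is the paper's proof: deform to $t\exp(sy)$, split $W$ into the three cosets of $W_H$ to get $\Theta_\lambda(t)=-2^{-8}\sum_j\varepsilon(w_j)\sigma(w_j\mu)\D(w_j\mu)$, kill $\sum_j\varepsilon(w_j)\D(w_j\mu)$ by the degree bound $16<24$, and use triality to isolate the unique coset sending $\mu$ into $V$. Your version of the vanishing identity notes that this sum equals $|W_H|^{-1}\sum_{w\in W}\varepsilon(w)\D(w\mu)$, hence is $W$-anti-invariant in $\mu$; this is an equivalent rephrasing of the paper's argument with $q(h)$.

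What needs fixing is bookkeeping only:
\begin{enumerate}
\item With $e^{\nu}(\exp(2\pi i x))=e^{2\pi i\langle\nu,x\rangle}$ you need $x_0=e_1$. Your choice $x_0=\tfrac12e_1$ gives $e^{e_1}(t)=-1$ and an element of order $4$, contradicting your own claim that $\pm e_i$ act trivially.
\item The cosets must be right cosets $W_Hw_j$, not $w_jW_H$. Since $W_H$ interchanges $S$ and $S'$, $\sigma(w_ju\mu)$ need not equal $\sigma(w_j\mu)$. Your displayed sum over $uw_j$ is the right-coset one, so only the notation is off.
\item The sign of the constant comes from $e^{\rho}(t)=\sigma(\rho)=-1$. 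Alternatively, as you suggest, fix it from $\lambda=0$, which requires $\D(s_{\beta_2}\rho)=128$.
\item Check explicitly that $s_{\beta_1}S=V$ and $s_{\beta_2}S'=V$, via the parity of $\langle\mu,\beta_i^\vee\rangle$, as in Cases C and D of the paper.
\end{enumerate}
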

	
	\section{Preliminaries}\label{sec:prelim}
	
	\subsection{The root datum of $F_4$}
	We use Bourbaki's conventions \cite[Plate~VIII]{Bou1} in $E=\R^4$ with the standard inner
	product $\langle\,,\rangle$. The simple roots of $F_4(\C)$ are
	\[
	\alpha_1=e_2-e_3,\quad \alpha_2=e_3-e_4,\quad \alpha_3=e_4,\quad
	\alpha_4=\tfrac12(e_1-e_2-e_3-e_4).
	\]
	The root system $\Phi$ consists of the $24$ long roots $\pm e_i\pm e_j$ ($i<j$) and the
	$24$ short roots $\pm e_i$ and $\tfrac12(\pm e_1\pm e_2\pm e_3\pm e_4)$. The positive
	roots $\Phi^+$ are $e_i$, $e_i\pm e_j$ ($i<j$) and $\tfrac12(e_1\pm e_2\pm e_3\pm e_4)$,
	so $|\Phi^+|=24$. Let $W=W(F_4)$ be the Weyl group; recall that $|W|=1152$. We write
	$\varepsilon(w)=\det(w)$ for the sign character of $W$; its restriction to any reflection
	subgroup is the sign character of that subgroup. The fundamental weights and $\rho$ are
	\begin{equation}\label{eq:f4-weights}
		\omega_1=(1,1,0,0),\quad \omega_2=(2,1,1,0),\quad
		\omega_3=\tfrac12(3,1,1,1),\quad \omega_4=(1,0,0,0),\quad
		\rho=\tfrac12(11,5,3,1).
	\end{equation}
	The group $G=F_4(\C)$ is both simply connected and adjoint, and its weight lattice is
	\[
	P=\Z^4\cup\bigl(\Z^4+\tfrac12(1,1,1,1)\bigr),
	\]
	the set of vectors whose coordinates are either all integers or all in $\Z+\tfrac12$.
	
	Let $T\subset G$ be a maximal torus with Lie algebra $\mathfrak t$. We identify
	$\mathfrak t$ with $\C^4=\C\otimes_\R\R^4$ in such a way that each coroot
	$\alpha^\vee\in\mathfrak t$ corresponds to the vector $2\alpha/\langle\alpha,\alpha\rangle$,
	and we extend $\langle\,,\rangle$ $\C$-bilinearly. A weight $\lambda\in P$ is then the
	linear form $h\mapsto\langle\lambda,h\rangle$ on $\mathfrak t$. The coroots are
	$\pm e_i\pm e_j$, $\pm2e_i$ and $(\pm1,\pm1,\pm1,\pm1)$, so the coroot lattice is
	\[
	Q^\vee=D_4=\Bigl\{x\in\Z^4:\ \textstyle\sum_i x_i\in2\Z\Bigr\},
	\]
	which is also the lattice spanned by the long roots. Throughout, $D_4$ denotes this
	lattice. Since $G$ is simply connected, the exponential map $\exp\colon\mathfrak t\to T$
	is a surjective homomorphism with kernel $2\pi i\,Q^\vee$. Each $\lambda\in P$ defines a
	character $e^\lambda$ of $T$ by
	\begin{equation}\label{eq:exp-char}
		e^{\lambda}(\exp h)=e^{\langle\lambda,h\rangle},\qquad h\in\mathfrak t,
	\end{equation}
	which is well defined because $\langle\lambda,\alpha^\vee\rangle\in\Z$ for all roots
	$\alpha$. For a root $\alpha$ we also write $\alpha(x)$ for $e^\alpha(x)$; thus
	$\alpha(\exp h)=e^{\langle\alpha,h\rangle}$.
	
	\subsection{The subsystem $B_4$}\label{sec:B4}
	The roots $\pm e_i\pm e_j$ and $\pm e_i$ form a subsystem $\Phi_H\subset\Phi$ of type
	$B_4$, which is the root system of $H$ (Section~\ref{sec:invo}), with positive system
	\[
	\Phi_H^+=\{e_i\pm e_j\ (i<j),\ e_i\},\qquad |\Phi_H^+|=16.
	\]
	The half-sum of its positive roots is $\rho_H=\tfrac12(7,5,3,1)$. Its Weyl group
	$W_H=W(B_4)$ is the group of signed permutations of the coordinates, of order
	$|W_H|=2^4\cdot4!=384$, so $|W:W_H|=3$. The open dominant chamber of $\Phi_H$ is
	$\{\nu\in\R^4:\ \nu_1>\nu_2>\nu_3>\nu_4>0\}$. The remaining $8$ positive roots of $\Phi$
	are the vectors $\tfrac12(e_1\pm e_2\pm e_3\pm e_4)$.
	
	\subsection{The involution $t$}\label{sec:invo}
	Define $\sigma\colon P\to\{\pm1\}$ by $\sigma(\nu)=(-1)^{2\nu_1}$, where $\nu_1$ is the
	first coordinate of $\nu$; this is well defined since $2\nu_1\in\Z$ for $\nu\in P$, and it
	is a group homomorphism. Explicitly, $\sigma(\nu)=1$ if $\nu\in\Z^4$ and $\sigma(\nu)=-1$
	if $\nu\in(\Z+\tfrac12)^4$. Since $X^*(T)=P$ and $T\cong\operatorname{Hom}(X^*(T),\C^*)$,
	there is a unique element $t\in T$ with
	\[
	e^{\nu}(t)=\sigma(\nu)\qquad\text{for all }\nu\in P;
	\]
	in the notation of \eqref{eq:exp-char}, $t=\exp(2\pi i\,e_1)$. Since $\sigma^2=1$ and
	$\sigma\neq1$, the element $t$ is an involution. Moreover, $\alpha(t)=1$ exactly for the
	integral roots, that is, for $\alpha\in\Phi_H$.
	
	Recall that $H=C_G(t)$. Since $G$ is simply connected, $H$ is connected~\cite{St}, so it is a
	connected reductive group with maximal torus $T$ and root system $\Phi_H$ of type $B_4$.
	Its character lattice $X^*(T)=P$ is the full weight lattice of $\Phi_H$: the coroots of
	$\Phi_H$ are $\pm e_i\pm e_j$ and $\pm2e_i$, which span $D_4$, and the dual lattice of
	$D_4$ is $P$. Hence $H$ is simply connected, and $H\cong\Spin(9,\C)$. This is the
	maximal-rank subgroup $\Spin(9,\C)\subset F_4(\C)$ of Borel and de
	Siebenthal~\cite{BdS}; see also~\cite{Ad}.

	\section{The action of $W(F_4)$ on $P/D_4$}\label{sec:triality}
	
	Recall that $D_4=\{x\in\Z^4:\sum_i x_i\ \text{even}\}$ is the lattice spanned by the long
	roots. Then $P/D_4\cong(\Z/2)^2$, with the four classes
	\begin{equation}\label{eq:D4-classes}
		\begin{aligned}
			0&=D_4, & V&=\{x\in\Z^4:\textstyle\sum x_i\ \text{odd}\},\\
			S&=\{x\in(\Z+\tfrac12)^4:\textstyle\sum x_i\ \text{odd}\}, &
			S'&=\{x\in(\Z+\tfrac12)^4:\textstyle\sum x_i\ \text{even}\}.
		\end{aligned}
	\end{equation}
	(For $x\in(\Z+\frac12)^4$ the sum $\sum x_i$ is an integer.) The group law satisfies
	$V+S=S'$, and every nonzero class has order $2$. By definition, $\sigma=1$ on $D_4\cup V$
	and $\sigma=-1$ on $S\cup S'$.
	
	\begin{lemma}\label{lem:triality}
		\leavevmode
		\begin{enumerate}
			\item $W$ preserves $D_4$, hence acts on $P/D_4$ fixing $0$.
			\item Reflections in long roots act trivially on $P/D_4$; hence $W(D_4)$ acts trivially.
			\item $W$ permutes $\{V,S,S'\}$ as the full symmetric group $S_3$, and $W_H$ is exactly
			the stabilizer of $V$.
			\item $W_H$ fixes $V$ and interchanges $S$ and $S'$. In particular $\sigma(v\nu)=\sigma(\nu)$
			for $v\in W_H$ and $\nu\in P$.
		\end{enumerate}
	\end{lemma}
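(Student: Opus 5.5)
The plan is to verify each part directly from the explicit description of $\Phi$ and of the four classes in \eqref{eq:D4-classes}, using the reflection formula $s_\alpha(x)=x-\langle x,\alpha^\vee\rangle\,\alpha$ together with the fact that $W$ is generated by the reflections $s_\alpha$, $\alpha\in\Phi$.

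For (1), I would show that $W$ preserves the lattice spanned by the long roots. Since $W$ preserves root lengths, it permutes the long roots, and $D_4$ is by definition their $\Z$-span. Because $W$ also preserves $P$, it acts on $P/D_4$, and linearity forces it to fix the zero class.

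For (2), let $\alpha=\pm e_i\pm e_j$ be a long root. Then $\alpha^\vee=\alpha$, so $\langle x,\alpha^\vee\rangle=\pm x_i\pm x_j$ is an integer for every $x\in P$. Hence $s_\alpha x-x\in\Z\alpha\subset D_4$, and the long reflections, which generate $W(D_4)$, act trivially on $P/D_4$.

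For (3) and (4), I would first treat the short reflections in $W_H$, namely $s_{e_i}$, which negate one coordinate. Negating $x_i$ changes $\sum x_k$ by $2x_i$. For $x\in\Z^4$ this change is even, so $V$ is preserved. For $x\in(\Z+\tfrac12)^4$ the change $2x_i$ is odd, so $S$ and $S'$ are swapped. Together with (2), this shows that $W_H$ fixes $V$ and acts on $\{S,S'\}$ through a nontrivial homomorphism, so $W_H$ interchanges $S$ and $S'$. The identity $\sigma(v\nu)=\sigma(\nu)$ for $v\in W_H$ then follows, since $\sigma$ is constant on $D_4\cup V$ and on $S\cup S'$. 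This proves (4).

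For the rest of (3), the key computation is the action of $s_{\beta_1}$ on a representative of $V$. Take $e_4\in V$. We have $\langle e_4,\beta_1\rangle=\tfrac12$ and $\beta_1^\vee=2\beta_1$, so $s_{\beta_1}e_4=e_4-\beta_1=\tfrac12(-1,-1,-1,1)$. Its coordinate sum is $-1$, which is odd, so $s_{\beta_1}e_4\in S$, and $s_{\beta_1}$ moves $V$. The image of $W$ in $\mathrm{Sym}\{V,S,S'\}\cong S_3$ therefore contains a transposition fixing $V$ (from $W_H$) and an element moving $V$. These generate a subgroup that acts transitively and has even order, so the image is all of $S_3$.

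It remains to identify $W_H$ with the stabilizer of $V$. The stabilizer of $V$ in $W$ has index $3$, because the action on $\{V,S,S'\}$ is transitive. It contains $W_H$, which also has index $3$, since $|W|=1152$ and $|W_H|=384$. The two subgroups therefore coincide.

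The only step with real content is the check that a short root outside $\Phi_H$ moves $V$, together with the index count. I expect no genuine obstacle, only care with the coroot normalization for short roots, where $\alpha^\vee=2\alpha$.
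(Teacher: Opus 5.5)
Your proposal is correct and follows essentially the same route as the paper: long reflections act trivially, the reflections $s_{e_i}$ fix $V$ and interchange $S$ and $S'$, one short reflection outside $W_H$ moves $V$, and the count $|W|/3=|W_H|$ identifies $W_H$ with the stabilizer of $V$. The only difference is cosmetic. You apply $s_{\beta_1}$ to the single representative $e_4$, which is legitimate because the action on classes is well defined by (1), whereas the paper applies $s_{\alpha_4}$ to an arbitrary $\nu\in V$.
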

	
	\begin{proof}
		(1) Since $W$ permutes the roots and preserves their lengths, the set of long roots is
		$W$-stable, and therefore $W$ preserves the lattice $D_4$ they generate. Hence $W$ acts
		$\Z$-linearly on $P/D_4$ and fixes the zero class.
		
		(2) Let $\alpha$ be a long root. For $\nu\in P$ we have
		\[
		s_\alpha\nu=\nu-\langle\nu,\alpha^\vee\rangle\,\alpha,
		\qquad \langle\nu,\alpha^\vee\rangle\in\Z,
		\]
		and $\alpha\in D_4$, so $s_\alpha\nu\equiv\nu\pmod{D_4}$. Thus every reflection in a long
		root acts trivially on $P/D_4$, and as $W(D_4)$ is generated by such reflections, $W(D_4)$
		acts trivially.
		
		Because $W$ acts on $P/D_4\cong(\Z/2)^2$ by group automorphisms, and
		$\operatorname{Aut}\big((\Z/2)^2\big)\cong S_3$ acts faithfully on the three nonzero
		elements, we obtain a homomorphism $\tau\colon W\to S_3=\operatorname{Sym}\{V,S,S'\}$.
		By (2), $\tau$ is determined by its values on the reflections in short roots, so it
		suffices to examine these.
		
		(4) The group $W_H$ is generated by the reflections in the long roots $e_i\pm e_j$
		together with the reflections in the short roots $e_i$. The former act trivially on
		$P/D_4$ by (2). For the latter,
		\[
		s_{e_i}\nu=\nu-2\nu_i e_i,
		\]
		which negates the $i$-th coordinate; this preserves $\Z^4$ and its half-integral coset and
		alters $\sum_k\nu_k$ by $2\nu_i$. If $\nu$ is integral then $2\nu_i$ is even, so $s_{e_i}$
		fixes the class $V$; if $\nu$ is half-integral then $2\nu_i$ is odd, so $s_{e_i}$ reverses
		the parity of $\sum_k\nu_k$ and hence interchanges $S$ and $S'$. Therefore $W_H$ fixes $V$
		and interchanges $S$ and $S'$. In particular $\sigma$ is constant on the $W_H$-orbits of
		classes, whence $\sigma(v\nu)=\sigma(\nu)$ for all $v\in W_H$ and $\nu\in P$.
		
		(3) It remains to show that $W$ does not stabilize $V$. For the short root
		$\alpha_4=\tfrac12(e_1-e_2-e_3-e_4)$ we have $\langle\nu,\alpha_4^\vee\rangle
		=\nu_1-\nu_2-\nu_3-\nu_4$ and
		\[
		s_{\alpha_4}\nu=\nu-(\nu_1-\nu_2-\nu_3-\nu_4)\,\alpha_4.
		\]
		If $\nu\in V$ is integral with odd coordinate sum, then
		$\nu_1-\nu_2-\nu_3-\nu_4\equiv\sum_k\nu_k\equiv1\pmod 2$, so $s_{\alpha_4}\nu$ is
		half-integral; that is, $s_{\alpha_4}$ carries $V$ into $\{S,S'\}$. Hence $\tau(W)$
		contains the transposition $(S\,S')$ together with an element not fixing $V$, so
		$\tau(W)=S_3$ and $W$ permutes $\{V,S,S'\}$ as the full symmetric group.
		
		Finally, the $W$-orbit of $V$ has size $3$, so $\operatorname{Stab}_W(V)$ has order
		$|W|/3=1152/3=384=|W_H|$. As $W_H$ fixes $V$, it is contained in this stabilizer, and
		equality of orders forces $W_H=\operatorname{Stab}_W(V)$.
	\end{proof}
	
	\begin{remark}\label{rem:cosets}
		This is the concrete form of the decomposition $W(F_4)=W(D_4)\rtimes S_3$: the normal
		subgroup $W(D_4)$ acts trivially on $P/D_4$, and the quotient $S_3$ is the triality group
		permuting the vector class $V$ and the two half-spin classes $S$ and $S'$. Since
		$W_H$ is the stabilizer of $V$ (Lemma~\ref{lem:triality}(3)), the map
		$W_Hw\mapsto w^{-1}V$ is a bijection from the three right cosets of $W_H$ in $W$ onto
		$\{V,S,S'\}$.
	\end{remark}
	
	\begin{lemma}\label{lem:classmu}
		For a dominant weight $\lambda=a\omega_1+b\omega_2+c\omega_3+d\omega_4$, the class of
		$\mu=\lambda+\rho$ in $P/D_4$ is $cS+dV+S'$; explicitly, $\mu$ lies in
		$0,\,V,\,S,\,S'$ according as $(c,d)\equiv(1,1),\,(1,0),\,(0,1),\,(0,0)\pmod 2$.
	\end{lemma}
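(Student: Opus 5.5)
The plan is to use that $\nu\mapsto[\nu]\in P/D_4$ is a group homomorphism. Writing $\mu=a\omega_1+b\omega_2+c\omega_3+d\omega_4+\rho$, we get $[\mu]=a[\omega_1]+b[\omega_2]+c[\omega_3]+d[\omega_4]+[\rho]$. So it suffices to locate each fundamental weight and $\rho$ among the four classes in \eqref{eq:D4-classes}, using the coordinates in \eqref{eq:f4-weights}.

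The individual classes are read off as follows.
\begin{itemize}
\item $\omega_1=(1,1,0,0)$ is integral with even coordinate sum, so $[\omega_1]=0$.
\item $\omega_2=(2,1,1,0)$ has sum $4$, so $[\omega_2]=0$.
\item $\omega_4=(1,0,0,0)$ is integral with odd sum, so $[\omega_4]=V$.
\item $\omega_3=\tfrac12(3,1,1,1)$ is half-integral with sum $3$, which is odd, so $[\omega_3]=S$.
\item $\rho=\tfrac12(11,5,3,1)$ is half-integral with sum $10$, which is even, so $[\rho]=S'$.
\end{itemize}
Substituting gives $[\mu]=cS+dV+S'$, which is the first claim of the lemma.

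Since every nonzero class has order $2$, the coefficients $c$ and $d$ only matter mod $2$. We then use the group law $V+S=S'$, together with $S+S'=V$ and $V+S'=S$, which follow because $P/D_4\cong(\Z/2)^2$. Running through the four parity cases:
\begin{itemize}
\item $(c,d)\equiv(1,1)$: $[\mu]=S+V+S'=S'+S'=0$.
\item $(c,d)\equiv(1,0)$: $[\mu]=S+S'=V$.
\item $(c,d)\equiv(0,1)$: $[\mu]=V+S'=S$.
\item $(c,d)\equiv(0,0)$: $[\mu]=S'$.
\end{itemize}
This is exactly the list stated in the lemma.

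There is no real obstacle here; the only point needing care is the parity of the coordinate sums in the half-integral cases. Alternatively, one can check the result directly from the explicit formula for $\mu$ given before Theorem~\ref{thm:main}. Its coordinates are half-integral iff $c$ is even, because the last coordinate is $\tfrac{c+1}{2}$. Its coordinate sum is $2a+4b+3c+d+11$, which has the parity of $c+d+1$. This gives the same four cases as a consistency check.
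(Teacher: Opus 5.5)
Your main argument is correct and is exactly the paper's proof: place $\omega_1,\omega_2\in D_4$, $\omega_3\in S$, $\omega_4\in V$ and $\rho\in S'$, add these classes in $P/D_4$, and use the group law of $(\Z/2)^2$. The one slip is in your optional consistency check, where the coordinate sum of $\mu$ is $2a+4b+3c+d+10$, not $+11$ (at $\lambda=0$ it is the sum $10$ of $\rho$, which you computed yourself), so its parity is that of $c+d$, not $c+d+1$. With the parity you wrote, the check would contradict all four cases of the lemma; with $c+d$ it confirms them.
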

	
	\begin{proof}
		By \eqref{eq:f4-weights}, $\omega_1,\omega_2\in D_4$, while $\omega_3\in S$, $\omega_4\in V$
		and $\rho\in S'$. Hence $\mu\equiv c\omega_3+d\omega_4+\rho\equiv cS+dV+S'\pmod{D_4}$, and the
		four cases follow from the group law of $P/D_4\cong(\Z/2)^2$ (using $S+S'=V$).
	\end{proof}
	
	\section{Anti-invariant polynomials}\label{sec:anti}
	
	Let $R$ be a root system in a Euclidean space $E$ with inner product
	$\langle\,,\rangle$, positive system $R^+$, Weyl group $W_R$ and sign character
	$\varepsilon$. Set $N=|R^+|$, let $\rho_R=\tfrac12\sum_{\alpha\in R^+}\alpha$ be half the
	sum of the positive roots, and define $\pi_R(h)=\prod_{\alpha\in R^+}\langle\alpha,h\rangle$
	for $h\in E$.
	
	\begin{lemma}\label{lem:anti}
		Every $W_R$-anti-invariant polynomial on $E$ is divisible by $\pi_R$. In
		particular, a nonzero $W_R$-anti-invariant polynomial has degree at least $N$, so
		every $W_R$-anti-invariant polynomial of degree ${}<N$ vanishes.
	\end{lemma}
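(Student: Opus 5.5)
The plan is the classical argument. Let $f$ be a $W_R$-anti-invariant polynomial on $E$ and fix a positive root $\alpha\in R^+$. Since $\varepsilon(s_\alpha)=-1$, we have $f(s_\alpha h)=-f(h)$ for all $h$. On the hyperplane $H_\alpha=\{h:\langle\alpha,h\rangle=0\}$ the reflection $s_\alpha$ acts trivially, so there $f(h)=-f(h)$, that is, $f$ vanishes identically on $H_\alpha$. The linear form $\ell_\alpha(h)=\langle\alpha,h\rangle$ is irreducible and its zero set is $H_\alpha$. Choosing coordinates with $\ell_\alpha$ as the first coordinate and expanding $f$ in the remaining variables shows that $\ell_\alpha$ divides $f$ in $\R[E]$; this is just division with remainder in one variable, the remainder being $f|_{H_\alpha}=0$.

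Next we combine the factors. For distinct positive roots $\alpha\neq\beta$, the forms $\ell_\alpha$ and $\ell_\beta$ are not proportional, because the only roots proportional to $\alpha$ in a reduced root system are $\pm\alpha$, and only $\alpha$ is positive. Since $\R[E]$ is a unique factorization domain and the $\ell_\alpha$ are pairwise non-associate irreducibles, each dividing $f$, their product $\pi_R=\prod_{\alpha\in R^+}\ell_\alpha$ divides $f$.

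The degree statement then follows at once. If $f\neq0$, write $f=\pi_R g$ with $g\neq0$, so $\deg f\ge\deg\pi_R=N$. Hence any anti-invariant polynomial of degree less than $N$ must vanish.

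The only point needing care is the reducedness used in the second step. All root systems in this paper ($F_4$, $B_4$) are reduced, so pairwise non-proportionality holds. I see no real obstacle beyond stating the divisibility-by-a-linear-form step cleanly.
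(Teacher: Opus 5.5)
Your argument is correct and follows the paper's own route. You show that each form $\langle\alpha,\cdot\rangle$ divides $f$ because $f$ changes sign under $s_\alpha$, and then deduce $\pi_R\mid f$ from the pairwise coprimality of these forms in $\R[E]$, making explicit the division-with-remainder step and the reliance on $R$ being reduced, both of which the paper leaves implicit.
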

	
	This is standard (see the discussion of skew-invariant polynomials in
	\cite[Ch.~3]{Hu2}): each linear form $\langle\alpha,\,\cdot\,\rangle$ divides every
	polynomial that changes sign under $s_\alpha$, and the forms $\langle\alpha,\,\cdot\,\rangle$
	for $\alpha\in R^+$ are pairwise coprime, so their product $\pi_R$ divides every
	$W_R$-anti-invariant polynomial.
	
	\begin{lemma}\label{lem:lowest}
		For $\nu,h\in E$,
		\[
		\frac{1}{N!}\sum_{v\in W_R}\varepsilon(v)\,\langle v\nu,h\rangle^{N}
		=\frac{\pi_R(\nu)}{\pi_R(\rho_R)}\,\pi_R(h).
		\]
		In particular, if $\nu=\nu'+\rho_R$ with $\nu'$ dominant integral, then
		$\pi_R(\nu)/\pi_R(\rho_R)=\dim V_{\nu'}$, where $V_{\nu'}$ is the irreducible
		representation of highest weight $\nu'$ of the complex semisimple Lie algebra with root
		system $R$.
	\end{lemma}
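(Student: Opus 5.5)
The statement to prove is Lemma~\ref{lem:lowest}. Both sides are polynomials in $h$ of degree $N$, so I will compare them as such. Put $F(\nu,h)=\frac{1}{N!}\sum_{v\in W_R}\varepsilon(v)\langle v\nu,h\rangle^N$. Since $\langle v\nu,h\rangle=\langle\nu,v^{-1}h\rangle$, $F$ is symmetric in $(\nu,h)$ and $W_R$-anti-invariant in each variable separately. Substituting $v\mapsto vu$ gives $F(u\nu,h)=\varepsilon(u)F(\nu,h)$, and similarly in $h$.

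For fixed $\nu$, the map $h\mapsto F(\nu,h)$ is an anti-invariant homogeneous polynomial of degree $N$. By Lemma~\ref{lem:anti} it is divisible by $\pi_R$, which has degree $N$, so $F(\nu,h)=c(\nu)\pi_R(h)$ for a scalar $c(\nu)$. Applying the same argument with the roles of $\nu$ and $h$ exchanged, $c(\nu)\pi_R(h)$ is an anti-invariant degree-$N$ polynomial in $\nu$. Hence $F(\nu,h)=\kappa\,\pi_R(\nu)\pi_R(h)$ for a single constant $\kappa$, which is possibly zero at this stage.

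The main point is identifying $\kappa$, and I would do it by a generating-function comparison with Weyl's denominator formula. The Weyl denominator identity gives
\[
\sum_{v}\varepsilon(v)e^{\langle v\rho_R,h\rangle}=\prod_{\alpha>0}\bigl(e^{\langle\alpha,h\rangle/2}-e^{-\langle\alpha,h\rangle/2}\bigr).
\]
The degree-$N$ Taylor term on the left is $F(\rho_R,h)$, and on the right it is $\pi_R(h)$, since every factor begins in degree $1$. Therefore $\kappa\,\pi_R(\rho_R)=1$. This also shows $\pi_R(\rho_R)\neq0$, which is immediate anyway because $\langle\alpha,\rho_R\rangle>0$ for every $\alpha>0$. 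Consequently $F(\nu,h)=\pi_R(\nu)\pi_R(h)/\pi_R(\rho_R)$, which is the displayed formula.

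For the final assertion, $\pi_R(\nu'+\rho_R)/\pi_R(\rho_R)=\prod_{\alpha>0}\langle\nu'+\rho_R,\alpha\rangle/\langle\rho_R,\alpha\rangle$. This is exactly the Weyl dimension formula for $\dim V_{\nu'}$.
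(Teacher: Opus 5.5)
Your proof is correct and follows the paper's argument essentially step for step: anti-invariance in each variable plus Lemma~\ref{lem:anti} gives $F(\nu,h)=\kappa\,\pi_R(\nu)\pi_R(h)$, and comparing degree-$N$ terms in the Weyl denominator identity at $\nu=\rho_R$ gives $\kappa=1/\pi_R(\rho_R)$. The only cosmetic difference is in showing that $c(\nu)$ is a polynomial in $\nu$: you use the symmetry $F(\nu,h)=F(h,\nu)$, whereas the paper fixes $h_0$ with $\pi_R(h_0)\neq0$ and writes $c(\nu)=F(\nu,h_0)/\pi_R(h_0)$.
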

	
	\begin{proof}
		Write $a(\nu,h)=\frac{1}{N!}\sum_{v\in W_R}\varepsilon(v)\langle v\nu,h\rangle^{N}$, a
		polynomial in $(\nu,h)$, homogeneous of degree $N$ in each variable. For $w\in W_R$,
		substituting $u=w^{-1}v$ and using $\langle v\nu,wh\rangle=\langle w^{-1}v\nu,h\rangle$, and
		substituting $u=vw$ respectively, we get
		\[
		a(\nu,wh)=\varepsilon(w)\,a(\nu,h)
		\qquad\text{and}\qquad
		a(w\nu,h)=\varepsilon(w)\,a(\nu,h).
		\]
		Since $a(\nu,\cdot)$ is $W_R$-anti-invariant of degree $N=\deg\pi_R$, Lemma~\ref{lem:anti}
		gives $a(\nu,h)=c(\nu)\,\pi_R(h)$, where $c(\nu)$ does not depend on $h$. Fixing $h_0$
		with $\pi_R(h_0)\neq0$, we have $c(\nu)=a(\nu,h_0)/\pi_R(h_0)$, so $c$ is a
		$W_R$-anti-invariant polynomial of degree $N$, and by Lemma~\ref{lem:anti} again
		$c(\nu)=K\pi_R(\nu)$ for a constant $K$. Thus
		\[
		a(\nu,h)=K\,\pi_R(\nu)\,\pi_R(h).
		\]
		To determine $K$, put $\nu=\rho_R$. By the Weyl denominator identity,
		\[
		\sum_{v\in W_R}\varepsilon(v)\,e^{\langle v\rho_R,h\rangle}
		=\prod_{\alpha\in R^+}\bigl(e^{\langle\alpha,h\rangle/2}-e^{-\langle\alpha,h\rangle/2}\bigr).
		\]
		The homogeneous component of degree $N$ of the left side is $a(\rho_R,h)$. Each factor on
		the right is $\langle\alpha,h\rangle$ plus terms of higher degree, so the right side has no
		terms of degree below $N$, and its component of degree $N$ is $\pi_R(h)$. Hence
		$K\pi_R(\rho_R)=1$. Since $\langle\alpha,\rho_R\rangle>0$ for all $\alpha\in R^+$, we have
		$\pi_R(\rho_R)\neq0$, so $K=1/\pi_R(\rho_R)$. The last assertion is the Weyl dimension
		formula.
	\end{proof}
	
	We apply the results of this section with $R=\Phi_H$, so that $W_R=W_H$, $N=16$,
	$\rho_R=\rho_H$ and $\pi_R=\pi_H$, where
	\[
	\pi_H(\nu)=\prod_{\alpha\in\Phi_H^+}\langle\alpha,\nu\rangle
	=\prod_{i}\nu_i\prod_{i<j}\bigl(\nu_i^2-\nu_j^2\bigr),\qquad \nu\in E.
	\]
	In this notation the polynomial $\D$ of \eqref{eq:D-def} is
	\[
	\D(\nu)=\frac{\pi_H(\nu)}{\pi_H(\rho_H)},\qquad \pi_H(\rho_H)=226800.
	\]
	Thus $\D$ is a $W_H$-anti-invariant polynomial on $E$. If $\nu\in P$ is regular and
	$B_4$-dominant, then $\nu-\rho_H$ is a dominant weight of $\Spin(9,\C)$, and
	$\D(\nu)=\dim V^{\Spin(9,\C)}_{\nu-\rho_H}$ by the Weyl dimension formula
	(Lemma~\ref{lem:lowest}). In general, $\D(\nu)=\varepsilon(u)\,\D(\nu^+)$, where $\nu^+$
	is the $B_4$-dominant element of the $W_H$-orbit of $\nu$ and $u\in W_H$ satisfies
	$u\nu=\nu^+$.
	
	\section{A coset formula for $\Theta_\lambda(t)$}\label{sec:coset}
	Recall that $W=W(F_4)$ and $W_H=W(B_4)$, that $|W:W_H|=3$, and let $\sigma$ be the
	homomorphism of Section~\ref{sec:invo}, so that $e^\nu(t)=\sigma(\nu)$ for $\nu\in P$.
	Let $\mu\in P$ be regular; we shall apply the results below to $\mu=\lambda+\rho$. Then
	$\mu$ is also regular for $\Phi_H$. Since $W_H$ acts simply transitively on the
	$B_4$-chambers, each right coset of $W_H$ in $W$ contains a unique element $w_i$ such that
	$w_i\mu$ lies in the dominant $B_4$-chamber. We write
	\[
	W=W_Hw_1\sqcup W_Hw_2\sqcup W_Hw_3
	\]
	accordingly, and call $w_1,w_2,w_3$ the coset representatives attached to $\mu$. The next
	proposition expresses the value of $\Theta_\lambda$ at $t$ in terms of these
	representatives, with $\D$ as in \eqref{eq:D-def}.
	
	\begin{proposition}\label{prop:coset}
		For $\mu=\lambda+\rho$,
		\[
		\Theta_\lambda(t)=-\frac{1}{2^{8}}\sum_{i=1}^{3}\varepsilon(w_i)\,\sigma(w_i\mu)\,\D(w_i\mu).
		\]
	\end{proposition}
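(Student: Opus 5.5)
The plan is to evaluate $\Theta_\lambda$ at a regular element near $t$ using the Weyl character formula, and then let that element tend to $t$. Fix $h\in E$ with $\pi_H(h)\neq0$ and with $\langle\alpha,h\rangle\notin\Z$ for all $\alpha\in\Phi$. For small real $s\neq0$, put $x_s=t\exp(2\pi i\,s h)$; rescaling $h$, I simply write $x_s=t\exp(sh)$. This element is regular for small $s\neq0$, so the Weyl character formula applies:
\[
\Theta_\lambda(x_s)=\frac{\sum_{w\in W}\varepsilon(w)e^{w\mu}(x_s)}{e^{\rho}(x_s)\prod_{\alpha\in\Phi^+}\bigl(1-e^{-\alpha}(x_s)\bigr)}.
\]
Since $\Theta_\lambda$ is continuous on $G$, we have $\Theta_\lambda(t)=\lim_{s\to0}\Theta_\lambda(x_s)$. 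I will find the leading term in $s$ of the numerator and of the denominator separately. Both will turn out to have order exactly $s^{16}$.

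\textbf{Denominator.} By \eqref{eq:exp-char} and the definition of $t$, $e^\nu(x_s)=\sigma(\nu)e^{s\langle\nu,h\rangle}$ for all $\nu\in P$. Since $\rho\in S'$, we get $e^\rho(x_s)\to\sigma(\rho)=-1$. The eight positive roots $\tfrac12(e_1\pm e_2\pm e_3\pm e_4)$ lie outside $\Phi_H$ and have $\sigma=-1$, so each factor $1-e^{-\alpha}(x_s)$ tends to $2$. The sixteen roots in $\Phi_H^+$ have $\sigma=1$, so each contributes $1-e^{-s\langle\alpha,h\rangle}=s\langle\alpha,h\rangle+O(s^2)$. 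Hence the denominator equals
\[
-2^8 s^{16}\pi_H(h)+O(s^{17}).
\]

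\textbf{Numerator.} I decompose $W=\bigsqcup_i W_Hw_i$ and write each $w\in W$ as $w=vw_i$ with $v\in W_H$. By Lemma~\ref{lem:triality}(4), $\sigma(vw_i\mu)=\sigma(w_i\mu)$. Also $\varepsilon(vw_i)=\varepsilon(v)\varepsilon(w_i)$. So the numerator is
\[
\sum_{i=1}^3\varepsilon(w_i)\sigma(w_i\mu)\sum_{v\in W_H}\varepsilon(v)e^{s\langle v\nu_i,h\rangle},\qquad \nu_i=w_i\mu.
\]
Now expand the exponential in powers of $s$. The coefficient of $s^k$ in the inner sum is $\frac1{k!}\sum_{v}\varepsilon(v)\langle v\nu_i,h\rangle^k$. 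As a polynomial in $h$ this is $W_H$-anti-invariant of degree $k$ (the same reindexing as in the proof of Lemma~\ref{lem:lowest}). By Lemma~\ref{lem:anti} it therefore vanishes for $k<16$. For $k=16$, Lemma~\ref{lem:lowest} identifies the coefficient as $\D(\nu_i)\pi_H(h)$. So the inner sum equals $s^{16}\D(w_i\mu)\pi_H(h)+O(s^{17})$.

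Dividing the numerator by the denominator and letting $s\to0$ cancels $s^{16}\pi_H(h)$ and yields the stated formula
\[
\Theta_\lambda(t)=-\frac{1}{2^8}\sum_{i=1}^3\varepsilon(w_i)\sigma(w_i\mu)\D(w_i\mu).
\]
The formula does not actually depend on the particular choice of coset representatives, because $\varepsilon(v)\D(v\nu)=\D(\nu)$ for $v\in W_H$. The choice of $B_4$-dominant representatives matters only later, for reading off dimensions.

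The main point requiring care is the vanishing of all lower-order terms in the numerator. This is exactly where anti-invariance under $W_H$ enters, via Lemma~\ref{lem:anti}, and it is what makes the orders of vanishing of numerator and denominator match at $16$. The other delicate point is the sign bookkeeping: the factor $e^\rho(t)=-1$ (from $\rho\in S'$) and the constancy of $\sigma$ on $W_H$-orbits (Lemma~\ref{lem:triality}(4)), which is what allows the $\sigma$-factor to be pulled out of each coset sum.
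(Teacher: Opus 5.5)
Your proposal is correct and follows the paper's proof: the same deformation $x_s=t\exp(sh)$, the same leading term $-2^{8}s^{16}\pi_H(h)$ for the denominator, and Lemma~\ref{lem:anti} and Lemma~\ref{lem:lowest} applied to the numerator. The only difference is the order of steps. You split $W$ into $W_H$-cosets before expanding in $s$, so the lower-order terms vanish coset by coset; the paper instead first shows that the full coefficient $p_k$ is $W_H$-anti-invariant, and uses the coset decomposition only for $p_{16}$.
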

	
	\begin{proof}
		Let $h\in\R^4$ with $\pi_H(h)\neq0$, and put $x_s=t\exp(sh)$ for real $s$. By the
		definition of $\sigma$ in Section~\ref{sec:invo} and by \eqref{eq:exp-char},
		\[
		e^{\nu}(x_s)=e^{\nu}(t)\,e^{\nu}\bigl(\exp(sh)\bigr)=\sigma(\nu)\,e^{s\langle\nu,h\rangle},
		\qquad \nu\in P.
		\]
		The roots of $\Phi_H$ are the integral roots $\pm e_i\pm e_j$ and $\pm e_i$, so
		$\sigma(\alpha)=1$ for $\alpha\in\Phi_H$. The remaining roots are half-integral, so
		$\sigma(\alpha)=-1$ for $\alpha\in\Phi\setminus\Phi_H$. Hence
		$\alpha(x_s)=e^{s\langle\alpha,h\rangle}\neq1$ for $\alpha\in\Phi_H$ and $s\neq0$, since
		$\langle\alpha,h\rangle\neq0$; and $\alpha(x_s)=-e^{s\langle\alpha,h\rangle}\neq1$ for
		$\alpha\notin\Phi_H$. Thus $x_s$ is regular for $s\neq0$. By the Weyl character
		formula~\cite[\S24]{Hu1}, $\Theta_\lambda(x_s)=A_\mu(x_s)/A_\rho(x_s)$, where
		$A_\nu=\sum_{w\in W}\varepsilon(w)e^{w\nu}$, and since $\Theta_\lambda$ is continuous,
		\[
		\Theta_\lambda(t)=\lim_{s\to0}\frac{A_\mu(x_s)}{A_\rho(x_s)}.
		\]
		
		\smallskip\noindent\emph{The Weyl denominator.} We have
		\begin{equation}\label{eq:Arho-xs}
			A_\rho(x_s)=e^{\rho}(x_s)\prod_{\alpha\in\Phi^+}\bigl(1-e^{-\alpha}(x_s)\bigr).
		\end{equation}
		Since $\rho\in S'$ by \eqref{eq:D4-classes}, we have $e^{\rho}(x_s)=-e^{s\langle\rho,h\rangle}$.
		Together with the values of $\alpha(x_s)$ computed above, this gives
		\begin{equation}\label{eq:Arho-xs-expanded}
			A_{\rho}(x_s)=-e^{s\langle\rho,h\rangle}
			\prod_{\alpha\in\Phi^{+}_{H}}\bigl(1-e^{-s\langle\alpha,h\rangle}\bigr)
			\prod_{\alpha\in\Phi^{+}\setminus\Phi^{+}_{H}}\bigl(1+e^{-s\langle\alpha,h\rangle}\bigr).
		\end{equation}
		Since $|\Phi^+|=24$ and $|\Phi_H^+|=16$, the second product has $8$ factors. Expanding
		each factor in $s$, we obtain
		\begin{equation}\label{eq:Arho-leading}
			\begin{aligned}
				A_\rho(x_s)&=-e^{s\langle\rho,h\rangle}
				\prod_{\alpha\in\Phi_H^+}\bigl(s\langle\alpha,h\rangle+O(s^2)\bigr)
				\prod_{\alpha\in\Phi^+\setminus\Phi_H^+}\bigl(2-s\langle\alpha,h\rangle+O(s^2)\bigr)\\
				&=-2^{8}\,s^{16}\,\pi_H(h)+O(s^{17}).
			\end{aligned}
		\end{equation}
		
		\smallskip\noindent\emph{The Weyl numerator.} Expanding the exponentials,
		\begin{equation}\label{eq:WN}
			A_{\mu}(x_s)=\sum_{w\in W}\varepsilon(w)\,\sigma(w\mu)\,e^{s\langle w\mu,h\rangle}
			=\sum_{k\ge0}\frac{s^k}{k!}\,p_k(h),
		\end{equation}
		where $p_k(h)=\sum_{w\in W}\varepsilon(w)\sigma(w\mu)\langle w\mu,h\rangle^k$. For
		$v\in W_H$ we have
		\begin{equation}\label{eq:pk-key1}
			\begin{aligned}
				p_k(vh)&=\sum_{w\in W}\varepsilon(w)\,\sigma(w\mu)\,\langle w\mu,vh\rangle^k\\
				&=\sum_{w\in W}\varepsilon(w)\,\sigma(w\mu)\,\langle v^{-1}w\mu,h\rangle^k\\
				&=\varepsilon(v)\sum_{w\in W}\varepsilon(v^{-1}w)\,\sigma(v^{-1}w\mu)\,
				\langle v^{-1}w\mu,h\rangle^k\\
				&=\varepsilon(v)\,p_k(h).
			\end{aligned}
		\end{equation}
		The second equality uses the $W$-invariance of $\langle\,,\rangle$. The third uses
		$\varepsilon(w)=\varepsilon(v)\varepsilon(v^{-1}w)$ and $\sigma(w\mu)=\sigma(v^{-1}w\mu)$,
		which holds by Lemma~\ref{lem:triality}(4) since $v^{-1}\in W_H$. The last follows by
		substituting $u=v^{-1}w$, which runs over $W$ as $w$ does. Thus each $p_k$ is
		$W_H$-anti-invariant. By Lemma~\ref{lem:anti}, applied with $R=\Phi_H$ and $N=16$, we get
		$p_k=0$ for $k<16$, and \eqref{eq:WN} becomes
		\begin{equation}\label{eq:Amu-leading}
			A_\mu(x_s)=\frac{s^{16}}{16!}\,p_{16}(h)+O(s^{17}).
		\end{equation}
		Since $\pi_H(h)\neq0$, \eqref{eq:Arho-leading} and \eqref{eq:Amu-leading} give
		\begin{equation}\label{eq:Theta-limit}
			\Theta_\lambda(t)=-\frac{p_{16}(h)}{16!\cdot2^{8}\,\pi_H(h)}.
		\end{equation}
		
		\smallskip\noindent\emph{Coset decomposition.} Write $W=W_Hw_1\sqcup W_Hw_2\sqcup W_Hw_3$
		with the $w_i$ chosen as above. Using $\varepsilon(ww_i)=\varepsilon(w)\varepsilon(w_i)$,
		the identity $\sigma(ww_i\mu)=\sigma(w_i\mu)$ for $w\in W_H$ (Lemma~\ref{lem:triality}(4)),
		and Lemma~\ref{lem:lowest} with $R=\Phi_H$, we obtain
		\begin{equation}\label{eq:p16}
			\begin{aligned}
				p_{16}(h)&=\sum_{i=1}^{3}\varepsilon(w_i)\,\sigma(w_i\mu)
				\sum_{w\in W_H}\varepsilon(w)\,\langle ww_i\mu,h\rangle^{16}\\
				&=16!\,\pi_H(h)\sum_{i=1}^{3}\varepsilon(w_i)\,\sigma(w_i\mu)\,\D(w_i\mu).
			\end{aligned}
		\end{equation}
		Substituting \eqref{eq:p16} into \eqref{eq:Theta-limit} proves the proposition.
	\end{proof}
	
	\begin{remark}\label{rem:GKRS}
		Proposition~\ref{prop:coset} can also be deduced from the generalized Weyl character
		formula of Gross, Kostant, Ramond and Sternberg~\cite{GKRS}; see also Kostant~\cite{Ko}.
		Let $\mathfrak g$ be a complex semisimple Lie algebra and $\mathfrak h\subset\mathfrak g$ a
		reductive subalgebra of full rank. For every dominant weight $\lambda$ they prove the
		identity of virtual $\mathfrak h$-modules
		\[
		V_\lambda\otimes(S^+-S^-)=\sum_{c\in C}\varepsilon(c)\,U_{c(\lambda+\rho)-\rho_H}.
		\]
		Here $S^\pm$ are the restrictions to $\mathfrak h$ of the two half-spin representations of
		$\mathfrak{so}(\mathfrak g/\mathfrak h)$, normalized so that $S^+$ contains the weight
		$\rho-\rho_H$. The set $C\subset W$ consists of the elements mapping the dominant chamber of
		$\mathfrak g$ into that of $\mathfrak h$; it contains exactly one element of each coset in
		$W_H\backslash W$. Finally, $U_\nu$ denotes the irreducible $\mathfrak h$-module of highest
		weight $\nu$.
		
		In our situation $\mathfrak g$ and $\mathfrak h$ are the Lie algebras of $G$ and
		$H\cong\Spin(9,\C)$, $\dim\mathfrak g/\mathfrak h=16$, and $C=\{w_1,w_2,w_3\}$ are the
		elements of Proposition~\ref{prop:coset}. Since $\rho-\rho_H=(2,0,0,0)\in P$, the modules
		$S^\pm$ are representations of $H$, so the identity holds for characters of $H$ and may be
		evaluated at $t$. The character of $S^+-S^-$ is
		$e^{\rho-\rho_H}\prod_{\alpha\in\Phi^+\setminus\Phi_H^+}(1-e^{-\alpha})$, whose value at
		$t$ is $2^{8}$. Since $t$ is central in $H$, it acts on $U_{w_i\mu-\rho_H}$ by the scalar
		$e^{w_i\mu-\rho_H}(t)=-\sigma(w_i\mu)$, and by the Weyl dimension formula for $H$ this
		module has dimension $\D(w_i\mu)$. Comparing both sides gives
		Proposition~\ref{prop:coset}. The three highest weights $w_i\mu-\rho_H$ were written down
		explicitly in Dynkin labels by Pengpan and Ramond~\cite[(62)]{PR}, who also determined,
		in terms of the parities of the labels, which of them are spinorial~\cite[\S2]{PR}.
		
		More generally, in a simply connected group the centralizer of a semisimple element is
		connected, of full rank, and contains that element in its centre. An analogous evaluation
		therefore applies to other elements of finite order; Proposition~\ref{prop:coset} is the
		case of the involution $t$. Our proof does not use~\cite{GKRS}: it proceeds directly from
		the Weyl character formula for $G$ by the limiting argument above.
	\end{remark}
	
	\section{A vanishing identity}\label{sec:identity}
	
	\begin{proposition}\label{prop:identity}
		Let $\mu\in P$ be regular, and let $w_1,w_2,w_3$ be the coset representatives attached to
		$\mu$ as above. Then
		\[
		\sum_{i=1}^{3}\varepsilon(w_i)\,\D(w_i\mu)=0.
		\]
	\end{proposition}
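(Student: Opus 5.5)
Proposition~\ref{prop:identity} is the analogue of Proposition~\ref{prop:coset} with the sign $\sigma$ removed, that is, with $t$ replaced by the identity element. We run the same polynomial argument without the twist by $\sigma$ and compare degrees. For $h\in\R^4$ and $k\ge0$ put
\[
q_k(h)=\sum_{w\in W}\varepsilon(w)\,\langle w\mu,h\rangle^{k}.
\]
Decomposing $W=W_Hw_1\sqcup W_Hw_2\sqcup W_Hw_3$ and applying Lemma~\ref{lem:lowest} with $R=\Phi_H$ to each coset, exactly as in \eqref{eq:p16}, gives
\[
q_{16}(h)=16!\,\pi_H(h)\sum_{i=1}^{3}\varepsilon(w_i)\,\D(w_i\mu).
\]
It therefore suffices to show that $q_{16}\equiv0$, since we may then choose $h$ with $\pi_H(h)\neq0$.

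The function $q_{16}$ is anti-invariant under the whole group $W=W(F_4)$, not just under $W_H$. Indeed, for $v\in W$, the computation of \eqref{eq:pk-key1} without the $\sigma$ factor gives $q_k(vh)=\varepsilon(v)q_k(h)$, with no appeal to Lemma~\ref{lem:triality}. Now apply Lemma~\ref{lem:anti} with $R=\Phi$, so that $N=|\Phi^+|=24$: every $W$-anti-invariant polynomial of degree less than $24$ vanishes. Since $q_{16}$ is homogeneous of degree $16<24$, we get $q_{16}=0$, which proves the identity.

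As an alternative route, one could expand the Weyl numerator $A_\mu(\exp(sh))$ in $s$. By the Weyl character formula at the identity, it vanishes to order $24=|\Phi^+|$, because the denominator does and the character is finite. Hence its $s^{16}$ coefficient $q_{16}/16!$ is zero. The direct argument via Lemma~\ref{lem:anti} is cleaner, and I would use that one.

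I expect no real obstacle. The only point needing care is that Lemma~\ref{lem:lowest} is applied to each coset with the same normalization $\D=\pi_H/\pi_H(\rho_H)$, which was already verified in the proof of Proposition~\ref{prop:coset}. Regularity of $\mu$ is used only to make the representatives $w_i$ well defined.
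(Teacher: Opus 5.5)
Your proposal is correct and is essentially the paper's own proof. The paper likewise notes that $q(h)=\sum_{w\in W}\varepsilon(w)\langle w\mu,h\rangle^{16}$ is $W$-anti-invariant of degree $16<24$, so it vanishes by Lemma~\ref{lem:anti} with $R=\Phi$, and then uses the coset computation \eqref{eq:p16} with $\sigma$ replaced by $1$ and an $h$ satisfying $\pi_H(h)\neq0$.
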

	
	\begin{proof}
		The polynomial $q(h)=\sum_{w\in W}\varepsilon(w)\langle w\mu,h\rangle^{16}$ is
		$W$-anti-invariant of degree $16<24=|\Phi^+|$, so $q=0$ by Lemma~\ref{lem:anti},
		applied with $R=\Phi$. On the other hand, the computation \eqref{eq:p16}, with $\sigma$
		replaced by $1$, gives
		\[
		q(h)=16!\,\pi_H(h)\sum_{i=1}^{3}\varepsilon(w_i)\,\D(w_i\mu).
		\]
		Choosing $h$ with $\pi_H(h)\neq0$, we conclude that the sum vanishes.
	\end{proof}
	
	\begin{remark}\label{rem:identity-GKRS}
		Proposition~\ref{prop:identity} is consistent with the formula of~\cite{GKRS} recalled in
		Remark~\ref{rem:GKRS}. Taking dimensions in that identity of virtual $H$-modules, the left
		side $V_\lambda\otimes(S^+-S^-)$ has dimension $0$, since
		$\dim S^+=\dim S^-=2^{7}$. By the Weyl dimension formula for $H$, the right side has
		dimension $\sum_{i=1}^{3}\varepsilon(w_i)\,\D(w_i\mu)$. This gives
		Proposition~\ref{prop:identity} for $\mu=\lambda+\rho$ with $\lambda$ dominant. The general
		case reduces to this one, because every regular $\mu\in P$ is $W$-conjugate to such a
		weight, and replacing $\mu$ by $w\mu$ multiplies the sum by $\varepsilon(w)$.
	\end{remark}
	
	\section{Proof of Theorem~\ref{thm:main}}\label{sec:proof}
	
	\begin{proof}[Proof of Theorem~\ref{thm:main}]
		Throughout, $\mu=\lambda+\rho$, which is regular and dominant for $F_4$. Recall that
		$\sigma=1$ on $D_4\cup V$ and $\sigma=-1$ on $S\cup S'$.
		
		\smallskip\noindent\emph{Choice of coset representatives.}
		For $u\in W_H$ and $w\in W$ we have $\varepsilon(uw)=\varepsilon(u)\varepsilon(w)$,
		$\sigma(uw\mu)=\sigma(w\mu)$ by Lemma~\ref{lem:triality}(4), and
		$\D(uw\mu)=\varepsilon(u)\D(w\mu)$ because $\pi_H$ is $W_H$-anti-invariant. Hence the
		quantities $\varepsilon(w)\D(w\mu)$ and $\varepsilon(w)\sigma(w\mu)\D(w\mu)$ depend only on
		the coset $W_Hw$. Consequently, Propositions~\ref{prop:coset} and~\ref{prop:identity} hold
		for any set of representatives $w_1,w_2,w_3$ of $W_H\backslash W$.
		
		By Remark~\ref{rem:cosets}, the map $W_Hw\mapsto w^{-1}V$ is a bijection from
		$W_H\backslash W$ onto $\{V,S,S'\}$. Now suppose $\mu$ lies in a nonzero class $X$. Then
		$w\mu\in V$ if and only if $w^{-1}V=X$, so exactly one coset maps $\mu$ into $V$. The
		other two map $\mu$ into the two nonzero classes different from $V$, on which $\sigma=-1$.
		
		\smallskip\noindent\textbf{Case A: $c$ and $d$ both odd.}
		By Lemma~\ref{lem:classmu}, $\mu\in D_4$. Since $W$ preserves $D_4$
		(Lemma~\ref{lem:triality}(1)), we have $\sigma(w_i\mu)=1$ for all $i$.
		Propositions~\ref{prop:coset} and~\ref{prop:identity} then give
		\[
		\Theta_\lambda(t)=-\frac{1}{2^{8}}\sum_{i=1}^{3}\varepsilon(w_i)\,\D(w_i\mu)=0.
		\]
		
		\smallskip\noindent\textbf{Cases B--D: $c$ and $d$ not both odd.}
		By Lemma~\ref{lem:classmu}, $\mu$ lies in a nonzero class $X\in\{V,S,S'\}$. Choose $w_1$ to
		be a representative of the unique coset mapping $\mu$ into $V$. Then
		$\sigma(w_1\mu)=1$ and $\sigma(w_2\mu)=\sigma(w_3\mu)=-1$. By
		Proposition~\ref{prop:identity},
		\[
		\sum_{i=1}^{3}\varepsilon(w_i)\,\sigma(w_i\mu)\,\D(w_i\mu)
		=\varepsilon(w_1)\D(w_1\mu)-\varepsilon(w_2)\D(w_2\mu)-\varepsilon(w_3)\D(w_3\mu)
		=2\,\varepsilon(w_1)\D(w_1\mu).
		\]
		Proposition~\ref{prop:coset} therefore gives
		\begin{equation}\label{eq:theta-X}
			\Theta_\lambda(t)=-\frac{1}{2^{7}}\,\varepsilon(w_1)\,\D(w_1\mu).
		\end{equation}
		It remains to identify $w_1$ in each case.
		
		\smallskip\noindent\textbf{Case B: $c$ odd, $d$ even.} Here $\mu\in V$, so we may take
		$w_1=1$, and \eqref{eq:theta-X} gives $\Theta_\lambda(t)=-2^{-7}\D(\mu)$.
		
		\smallskip\noindent\textbf{Case C: $c$ even, $d$ odd.} Here $\mu\in S$. We have
		$\beta_1^\vee=(1,1,1,1)$ and $\langle\mu,\beta_1^\vee\rangle=\sum_k\mu_k$, which is odd
		because $\mu\in S$. Thus $s_{\beta_1}\mu=\mu-\bigl(\sum_k\mu_k\bigr)\beta_1$ is integral
		with coordinate sum $-\sum_k\mu_k$, which is odd. Hence $s_{\beta_1}\mu\in V$, so we may
		take $w_1=s_{\beta_1}$. Since $\varepsilon(s_{\beta_1})=-1$, \eqref{eq:theta-X} gives
		$\Theta_\lambda(t)=2^{-7}\D(s_{\beta_1}\mu)$.
		
		\smallskip\noindent\textbf{Case D: $c$ and $d$ both even.} Here $\mu\in S'$. We have
		$\beta_2^\vee=(1,1,1,-1)$ and $\langle\mu,\beta_2^\vee\rangle=\sum_k\mu_k-2\mu_4$. This is
		odd, since $\sum_k\mu_k$ is even and $2\mu_4$ is odd. Thus $s_{\beta_2}\mu$ is integral
		with coordinate sum $2\mu_4$, which is odd. Hence $s_{\beta_2}\mu\in V$, so we may take
		$w_1=s_{\beta_2}$, and \eqref{eq:theta-X} gives $\Theta_\lambda(t)=2^{-7}\D(s_{\beta_2}\mu)$.
		
		\smallskip\noindent\emph{The remaining assertions.}
		Since $\mu$ is regular, so is $w\mu$ for every $w\in W$; hence $\pi_H(w\mu)\neq0$ and
		$\D(w\mu)\neq0$. Therefore $\Theta_\lambda(t)\neq0$ in Cases B--D, which proves that
		$\Theta_\lambda(t)=0$ exactly when $c$ and $d$ are both odd. Finally, let $u\in W_H$ satisfy
		$uw\mu=(w\mu)^+$. Then $\D(w\mu)=\varepsilon(u)\D\bigl((w\mu)^+\bigr)$, and by the Weyl
		dimension formula for $B_4$,
		$\D\bigl((w\mu)^+\bigr)=\dim V^{\Spin(9,\C)}_{\nu}$ with $\nu=(w\mu)^+-\rho_H$. This gives
		$\Theta_\lambda(t)=\pm2^{-7}\dim V^{\Spin(9,\C)}_{\nu}$.
	\end{proof}

	\begin{remark}\label{rem:simplicity}
		The simplicity of Theorem~\ref{thm:main} comes from two facts. First, $W_H\backslash W$
		has only $3$ elements, so the coset formula (Proposition~\ref{prop:coset}) expresses
		$\Theta_\lambda(t)$ as a sum of three terms; for $\GL_n$ at an involution with centralizer
		$\GL_p\times\GL_q$ there are $\binom np$ terms. Second, the triality action of $W$ on
		$P/D_4$ forces the signs $\sigma(w_i\mu)$ to be either all equal or different on exactly
		one coset. Together with the vanishing identity (Proposition~\ref{prop:identity}), the
		three-term sum therefore reduces to a single term or to zero.
	\end{remark}
	
	\section{The principal involution}\label{sec:principal}
	
	We now turn to the other conjugacy class of involutions. For the principal involution
	$t_0=\rho^\vee(-1)$, the character values were determined by Nadimpalli, Pattanayak and
	Prasad; the following is the case $m=2$ of \cite[Theorem~4.1]{NPP}, restated in our
	notation.
	
	\begin{theorem}[{\cite[Theorem~4.1]{NPP}}]\label{thm:principal}
		Let $t_0\in F_4(\C)$ be the principal involution, whose centralizer is
		$\bigl(\SL(2,\C)\times\Sp(6,\C)\bigr)/\{\pm1\}$. Let
		$\lambda=a\omega_1+b\omega_2+c\omega_3+d\omega_4$ be a dominant weight, and let
		$\mu=\lambda+\rho$. Then $\Theta_\lambda(t_0)=0$ if and only if $c$ and $d$ are both odd.
		Otherwise the set
		\[
		\Psi_\mu=\{\alpha\in\Phi:\ \langle\mu,\alpha^\vee\rangle\in2\Z\}
		\]
		is a root system of type $A_1\times B_3$. With respect to the positive system
		$\Psi_\mu^+=\Psi_\mu\cap\Phi^+$, the weight $\nu=\tfrac12\mu-\rho_{\Psi_\mu}$ is dominant
		integral, and
		\[
		\Theta_\lambda(t_0)=(-1)^{a+b+c}\,2^{-1}\dim U_{\nu},
		\]
		where $\rho_{\Psi_\mu}$ is the half-sum of the roots in $\Psi_\mu^+$, and $U_\nu$ is the
		irreducible representation of $\SL(2,\C)\times\Spin(7,\C)$ of highest weight $\nu$.
		Explicitly, by the Weyl dimension formula,
		\[
		\dim U_\nu=\prod_{\alpha\in\Psi_\mu^+}
		\frac{\langle\mu,\alpha^\vee\rangle}{2\,\langle\rho_{\Psi_\mu},\alpha^\vee\rangle}.
		\]
	\end{theorem}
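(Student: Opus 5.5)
The plan is to reprove Theorem~\ref{thm:principal} by running the deformation argument of Section~\ref{sec:coset}, this time around $t_0=\rho^\vee(-1)$, rather than simply quoting \cite{NPP}. Identify $t_0=\exp(\pi i\rho^\vee)$, so that $e^\nu(t_0)=(-1)^{\langle\nu,\rho^\vee\rangle}$. Then the roots with $\alpha(t_0)=1$ are those of even height, and they form a subsystem $\Phi_0$ of type $A_1\times C_3$, the root system of the centralizer.

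\emph{Limit of the Weyl quotient.} First I would put $x_s=t_0\exp(sh)$ and expand $A_\mu(x_s)$ and $A_\rho(x_s)$ in powers of $s$, exactly as in Proposition~\ref{prop:coset}. For the denominator, each of the $|\Phi^+\setminus\Phi_0^+|$ factors contributes a $2$, giving leading term $\pm2^{k}s^{|\Phi_0^+|}\pi_{\Phi_0}(h)$. The numerator naturally decomposes over $W_{\Phi_0}\backslash W$, which has $1152/(8\cdot 48)=3$ cosets. Lemma~\ref{lem:lowest} then evaluates each coset sum, and the vanishing identity analogous to Proposition~\ref{prop:identity} applies.

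\emph{Main obstacle.} This route produces a formula in terms of $\Spin(7)$-type data only after a duality step. The statement of \cite{NPP} is phrased through $\Psi_\mu$ and $\tfrac12\mu$, i.e.\ through the dual group, where $C_3$ becomes $B_3$. So I expect it to be cleaner to use the Weyl numerator directly in the form
\[
A_\mu(t_0\exp(sh))=\sum_{w\in W}\varepsilon(w)\,e^{\langle w\mu,\pi i\rho^\vee+sh\rangle}.
\]
Interchanging the roles of $\mu$ and $\rho^\vee$ via $\langle w\mu,\rho^\vee\rangle=\langle\mu,w^{-1}\rho^\vee\rangle$, the sign now depends on $w^{-1}\rho^\vee$ modulo $2\mu$-integrality. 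This is the approach of \cite{NPP}: one evaluates the numerator as the dual-group denominator at the element $\exp(\pi i\mu)$, whose centralizer has root system $\Psi_\mu^\vee$.

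\emph{Vanishing and type of $\Psi_\mu$.} I would then check, case by case over the classes of Lemma~\ref{lem:classmu}, when $\Psi_\mu$ has rank $4$ with type $A_1\times B_3$. The condition $\langle\mu,\alpha^\vee\rangle\in2\Z$ for $\alpha$ long means $\langle\mu,\alpha\rangle$ is even. When $\mu\in D_4$ (that is, $c,d$ both odd), the resulting subsystem is not of maximal rank, or its count is wrong, and the character vanishes. In the other three classes, triality (Lemma~\ref{lem:triality}) permutes the situations, so I would verify only one class explicitly, say $\mu\in V$, and transport the result to the others.

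\emph{Sign and dimension.} Finally, the leading coefficient is given by Lemma~\ref{lem:lowest} applied to $\Psi_\mu$, which yields the product formula for $\dim U_\nu$. The factor $2^{-1}$ arises as the ratio of the denominator constants. The sign $(-1)^{a+b+c}$ I would obtain by computing $e^{\mu-\rho}(t_0)=(-1)^{\langle\lambda,\rho^\vee\rangle}$, using $\langle\omega_i,\rho^\vee\rangle=16,30,21,11$. Its parity is $c+d$, which does not obviously equal $a+b+c$. Reconciling the sign with the one in \cite{NPP} via the choice of dominant chamber representative is therefore the step I expect to be most delicate.
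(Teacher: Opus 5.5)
The paper does not prove this theorem; it quotes \cite[Theorem~4.1]{NPP} at $m=2$, so your argument has to stand on its own. As written it has two concrete gaps.

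\textbf{The mechanism producing the closed formula is never fixed.} Your first route, via Section~\ref{sec:coset}, cannot collapse. Since $|W(A_1\times C_3)|=2\cdot48=96$, there are $12$ cosets, not $3$. One vanishing identity cannot reduce a $12$-term sum with prefactor $2^{-14}$ to a single term.

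Your second route keeps a general $h$. But $\sum_w\varepsilon(w)e^{\langle w\mu,\pi i\rho^\vee+sh\rangle}$ is a dual Weyl denominator only when $h\in\C\rho^\vee$. The missing step is to take $h=\rho^\vee$, which is allowed since $\rho^\vee$ is regular. Equivalently, use
\[
\Theta_\lambda(\rho^\vee(q))=q^{\langle\lambda,\rho^\vee\rangle}\prod_{\alpha\in\Phi^+}\frac{1-q^{-\langle\mu,\alpha^\vee\rangle}}{1-q^{-\langle\rho,\alpha^\vee\rangle}},\qquad q\to-1 .
\]
Everything is then read off factor by factor, and neither Lemma~\ref{lem:lowest} nor the coset decomposition is needed.
\begin{itemize}
\item The denominator vanishes to order $|\Psi_\rho^+|=10$; the even coroot heights are $2,2,2,4,4,4,6,6,8,10$. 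The numerator vanishes to order $|\Psi_\mu^+|\ge10$, so $\Theta_\lambda(t_0)=0$ if and only if $|\Psi_\mu^+|>10$.
\item For $\mu\in D_4$, every short root lies in $\Psi_\mu$, so $|\Psi_\mu^+|\ge12$. The subsystem has full rank, so your ``not of maximal rank'' alternative is wrong.
\item For $\mu$ in $V$, $S$ or $S'$ one gets $A_1\times B_3$ with $10$ positive roots. Your triality transport is fine here, since $\Psi_{w\mu}=w\Psi_\mu$.
\item In the nonvanishing case the $14$ odd factors contribute $2^{14}/2^{14}$. The even factors give $\prod_{\Psi_\mu^+}\langle\mu,\alpha^\vee\rangle$ divided by $2^3\cdot4^3\cdot6^2\cdot8\cdot10=2\cdot2^{10}\cdot720$, where $720=\prod_{\Psi_\mu^+}\langle\rho_{\Psi_\mu},\alpha^\vee\rangle$. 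This is exactly where the $2^{-1}$ and the factors $2$ in $\dim U_\nu$ come from; ``ratio of denominator constants'' does not pin this down.
\end{itemize}

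\textbf{Your sign computation is wrong, so there is nothing to reconcile.} In these coordinates $\rho^\vee=(8,3,2,1)$, so $\langle\omega_i,\rho^\vee\rangle=11,21,15,8$, not $16,30,21,11$. Hence $(-1)^{\langle\lambda,\rho^\vee\rangle}=(-1)^{a+b+c}$. Because $\mu$ is dominant, every remaining factor in the limit is positive: odd factors tend to $2$, and even factors give ratios $k/l>0$. So the sign is $(-1)^{a+b+c}$ directly.

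Trying to match your $(-1)^{c+d}$ to the statement via chamber representatives would introduce an error. For the adjoint representation $\lambda=\omega_1$, one has $\Theta_{\omega_1}(t_0)=4+20-28=-4$. This agrees with $(-1)^{a+b+c}=-1$ but not with $(-1)^{c+d}=+1$.
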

	
	Comparing Theorems~\ref{thm:main} and~\ref{thm:principal}, we obtain the following.
	
	\begin{corollary}\label{cor:all-involutions}
		Let $\lambda=a\omega_1+b\omega_2+c\omega_3+d\omega_4$ be a dominant weight of $F_4(\C)$.
		The following are equivalent:
		\begin{enumerate}
			\item $\Theta_\lambda(s)=0$ for some involution $s\in F_4(\C)$;
			\item $\Theta_\lambda(s)=0$ for every involution $s\in F_4(\C)$;
			\item $c$ and $d$ are both odd, that is, $\lambda+\rho\in D_4$.
		\end{enumerate}
	\end{corollary}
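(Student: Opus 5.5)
The corollary is essentially a formal consequence of Theorem~\ref{thm:main} and Theorem~\ref{thm:principal}, once we know the classification of involutions. The first step is to recall from the introduction (Borel--de Siebenthal) that every involution $s\in F_4(\C)$ is conjugate either to $t$ or to the principal involution $t_0$. Since characters are class functions, $\Theta_\lambda(s)$ equals $\Theta_\lambda(t)$ or $\Theta_\lambda(t_0)$. Hence conditions (1) and (2) reduce to statements about just these two elements.

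Next I would prove (3)$\Rightarrow$(2). If $c$ and $d$ are both odd, then Theorem~\ref{thm:main} (Case A) gives $\Theta_\lambda(t)=0$, and Theorem~\ref{thm:principal} gives $\Theta_\lambda(t_0)=0$. By the classification, $\Theta_\lambda(s)=0$ for every involution. The implication (2)$\Rightarrow$(1) is trivial, since involutions exist (for instance $t$).

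For (1)$\Rightarrow$(3), suppose $\Theta_\lambda(s)=0$ for some involution $s$. Then $s$ is conjugate to $t$ or to $t_0$, so $\Theta_\lambda(t)=0$ or $\Theta_\lambda(t_0)=0$. In the first case the ``if and only if'' part of Theorem~\ref{thm:main} forces $c$ and $d$ odd. In the second case the corresponding part of Theorem~\ref{thm:principal} does the same. Finally, the reformulation ``$\lambda+\rho\in D_4$'' is exactly Lemma~\ref{lem:classmu}: the class of $\mu$ in $P/D_4$ is $0$ if and only if $(c,d)\equiv(1,1)\pmod 2$.

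The only point needing care is the completeness of the list of involution classes. This is quoted from \cite{BdS} (see also \cite{Ad,Re1}), together with the check that the two elements $t$ and $t_0$ are indeed non-conjugate representatives. They are, because their centralizers $\Spin(9,\C)$ and $(\SL(2,\C)\times\Sp(6,\C))/\{\pm1\}$ differ. I do not expect any genuine obstacle beyond citing this classification correctly.
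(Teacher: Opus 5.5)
Your proposal is correct and follows the paper's proof: every involution is conjugate to $t$ or $t_0$ by \cite{BdS}, $\Theta_\lambda$ is a class function, the vanishing criteria of Theorems~\ref{thm:main} and~\ref{thm:principal} coincide, and the reformulation $\lambda+\rho\in D_4$ is Lemma~\ref{lem:classmu}. The non-conjugacy of $t$ and $t_0$ is true but not needed; only the completeness of the list of involution classes enters the argument.
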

	
	\begin{proof}
		Every involution of $F_4(\C)$ is conjugate to $t$ or to $t_0$~\cite{BdS}, and
		$\Theta_\lambda$ is a class function. By Theorems~\ref{thm:main} and~\ref{thm:principal},
		$\Theta_\lambda$ vanishes at $t$, and also at $t_0$, exactly when $c$ and $d$ are both
		odd. Hence (1), (2) and (3) are equivalent. The reformulation $\lambda+\rho\in D_4$ in (3)
		is Lemma~\ref{lem:classmu}.
	\end{proof}
	
	\section{Concluding remarks}\label{sec:remarks}
	
	\begin{remark}\label{rem:contrast}
		The principal involution $t_0$ is conjugate to the lifts to $F_4(\C)$ of the regular
		elliptic element $w_0=-1$ of $W$, which is why the methods of~\cite{NPP} and~\cite{Re2}
		apply to it. The involution $t$ is not of this type; see~\cite[Ch.~8]{Kac}
		and~\cite{Re1} for the classification of torsion elements, and~\cite{Ca} for elliptic
		classes in Weyl groups. This is why a different argument is needed for $t$.
	\end{remark}
	
	\begin{remark}\label{rem:mechanism}
		The proof of Theorem~\ref{thm:main} uses two features of the pair
		$(F_4(\C),\Spin(9,\C))$: the index $|W:W_H|=3$ is small, and the sign
		$\nu\mapsto e^\nu(t)$ is constant on the classes of $P/D_4$, a group of order $4$ on which
		$W$ acts through $S_3$ with $W_H$ as the stabilizer of a nonzero class. It would be
		interesting to find analogous structures for the involutions of $E_6$, $E_7$ and $E_8$,
		for instance the involution of $E_8(\C)$ with centralizer $\Spin(16,\C)/\mu_2$.
	\end{remark}

\subsection*{Acknowledgements}
The author thanks Prof.~Dipendra Prasad for reading the manuscript and for his questions,
which led to Remark~\ref{rem:simplicity}. The mathematical results and proofs in this
paper are the author's own. An AI assistant was used to improve the presentation, to check
the main formulas numerically, and to search the literature. The author has verified all
content and takes full responsibility for it.

\end{document}